\documentclass[12pt]{amsart}
\usepackage{amsfonts,amssymb}
\usepackage[all]{xypic}
\usepackage{amsmath}
\usepackage{amsthm,amscd,latexsym}
\usepackage{color}

\newtheorem{theorem}{Theorem}[section]
\newtheorem{corollary}[theorem]{Corollary}
\newtheorem{Definition}[theorem]{Definition}
\newtheorem{lemma}[theorem]{Lemma}
\newtheorem{proposition}[theorem]{Proposition}
\newtheorem{Example}[theorem]{Example}
\newtheorem{Remark}[theorem]{Remark}

\newenvironment{remark}{\begin{Remark}\begin{em}}{\end{em}\end{Remark}}
\newenvironment{example}{\begin{Example}\begin{em}}{\end{em}\end{Example}}
\newenvironment{definition}{\begin{Definition}\begin{em}}{\end{em}\end{Definition}}

\setlength{\textwidth}{6in}

\setlength{\oddsidemargin}{0pt} \setlength{\evensidemargin}{0pt}
\setlength{\textwidth}{6in}

\newcommand{\De}{\mathcal D}

\newcommand{\ve}{\varepsilon}

\begin{document}
\title{An Inverse Function Theorem Converse}
\author[Lawson]{Jimmie Lawson}
\address{Department of Mathematics, Louisiana State University,
Baton Rouge, LA 70803, USA}\email{lawson@math.lsu.edu}
\date{December, 2018}

\maketitle
\begin{abstract}
We establish the following converse of the well-known inverse function theorem.  Let $g:U\to V$ and 
$f:V\to U$ be inverse homeomorphisms between open subsets of Banach spaces.  If $g$ is differentiable of class
$C^p$ and $f$ if locally Lipschitz, then the Fr\'echet derivative of $g$ at each point of $U$ is invertible and
$f$ must be differentiable of class $C^p$.
\end{abstract}

\noindent Primary 58C20; Secondary 46B07,  46T20, 46G05, 58C25

\noindent \textit{Key words and phrases.} Inverse function theorem, Lipschitz map, Banach space, chain rule

\section{Introduction}  
A general form of the well-known inverse function theorem asserts that if $g$ is a differentiable function of class $C^p$, $p\geq 1$, between two
open subsets of Banach spaces and if the Fr\'echet derivative of $g$ at some point $x$ is invertible, then locally around $x$, there exists a 
differentiable inverse map $f$ of $g$ that is also of class $C^p$.    But in various settings, one may have the inverse function $f$ readily at
hand and want to know about the invertibility of the Fr\'echet derivative of $g$ at $x$ and whether $f$ is of class $C^p$.  Our purpose in this 
paper is to present a relatively elementary proof of this converse result under the general hypothesis that the inverse $f$ is
(locally) Lipschitz. Simple examples like $g(x)=x^3$ at $x=0$ on the real line show that the assumption of continuity alone is not enough.
Thus it is a bit surprising that the mild strengthening to the assumption that the inverse is locally Lipschitz suffices.  

Helpful tools for the task at hand have been developed in the intense study of Lipschitz functions in the Banach space setting
motivated by Rademacher's theorem concerning the existence of an abundance of points of differentiability  of Lipschitz mappings
in the setting of euclidean spaces.  In particular we recall in the next section a useful generalization of the chain rule by O.\ Maleva and D.\ Preiss \cite{MP}.
In Sections 3 and 4 we present our main results and provide some follow-up illustrative material in Section 5.

For a comprehensive reference
to the  inverse function and implicit function theorems and related theory, we refer the reader to \cite{KP}.

\section{A general version of the chain rule}
In this section we recall some notions of differentiability of Lipschitz functions between open subsets of a Banach spaces and a generalized 
chain rule from  the work of Maleva and Preiss \cite{MP}. This material will be crucial to the derivation of our main results in the next section.

Suppose that $Y$ and $Z$ are Banach spaces, $U$ is a nonempty open subset of $Y$,  $f : U \to Z$, and $y\in U$, $v \in Y $.
Recall that  $\lim_{t\to 0^+} (f(y+tv)-f(y))/t$, if it exists, is called the \emph{one-sided directional derivative of $f$ at $y$ in the direction}
$v$.  Similarly if $\lim_{t\to 0} (f(y+tv)-f(y))/t$ exists, it is called the (\emph{bilateral}) \emph{directional derivative of $f$ at $y$ in the direction} $v$.  If the
directional derivative of $f$ at $y$ in the direction $v$ exists for all $v\in Y$, then the mapping from $Y$ to $Z$ sending $v$ to its directional
derivative is, by definition, the \emph{G\^{a}teaux derivative} (by some authors the mapping is also required to be a continuous linear map).

 Maleva and Preiss \cite{MP} have given the following generalization of the one-sided directional derivative.
\begin{definition}
The \emph{derived set of $f$ at the point $y$ in the direction of $v$} is defined as the set
$\De f(y, v)$ consisting  of all existing limits $\mathrm{lim}_{n\to\infty}(f(y + t_nv)-f(y))/t_n$, where $t_n\searrow 0$. The
\emph{$\delta$-approximating derived set of $f$ at $y$ in the direction of $v$} is defined, for $\delta>0$, by
\begin{equation*}
\De_\delta f(y,v)=\Big\{ \frac{f(y + tv)-f(y)}{t}: 0<t <\delta\Big\}.
\end{equation*}
\end{definition}

\begin{remark}
It is easy to see that
\begin{equation}\label{E:derived}
\De f(y,v)=\bigcap_{\delta>0} \overline{\De_\delta f(y,v)}
\end{equation}
In general the derived set may be empty, a single point, or multi-valued.  If 
$\De f(y,v)$ is a single point, then it is the one-sided directional derivative 
of $f$ at $y$ in the direction $v$,  and we denote it by $f_+'(y,v)$. If the directional derivative of $f$ at $y$
in the direction $v$ exists, it is denoted $f'(y,v)$.  

If $f$ is G\^{a}teaux differentiable at $y$, then the G\^{a}teaux derivative at $y$ is equal
to $f'(y,\cdot)=\De f(y,\cdot)$.  
\end{remark}

We recall a general version of the chain rule from \cite[Corollary 2.6]{MP}.
\begin{proposition}\label{P:chain}
Suppose $X$, $Y$ are Banach spaces, $x\in U$, an open subset of $X$, and $V$ is an open subset of $Y$.
If $g :U\to V$ is continuous and has a one-sided directional derivative at $x$ in the
direction of $v$  and  $f :V\to  Z$ is Lipschitz, then
$$\De(f \circ g)(x, v) = \De f(g(x), g_+'(x,v)).$$
\end{proposition}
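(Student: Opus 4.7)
The plan is to reduce the identity to a single Lipschitz estimate that lets us replace the ``curved'' increment $g(x+tv)-g(x)$ by the ``straight'' increment $tg_+'(x,v)$ inside $f$. Let me write $y:=g(x)$, $w:=g_+'(x,v)$, and $L$ for a Lipschitz constant of $f$ on a neighborhood of $y$ (which applies whenever $t>0$ is small enough, because $g$ is continuous at $x$).

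The core observation is that for every $t>0$ small enough,
\begin{equation*}
\left\| \frac{f(g(x+tv))-f(y)}{t} - \frac{f(y+tw)-f(y)}{t} \right\|
\;\le\; L\cdot\frac{\|g(x+tv)-g(x)-tw\|}{t}.
\end{equation*}
Since $w=g_+'(x,v)$, the right-hand side tends to $0$ as $t\searrow 0$. Thus, for any sequence $t_n\searrow 0$, the two difference quotients
\begin{equation*}
A_n:=\frac{f(g(x+t_nv))-f(y)}{t_n}, \qquad B_n:=\frac{f(y+t_nw)-f(y)}{t_n}
\end{equation*}
satisfy $A_n-B_n\to 0$. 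In particular $A_n$ converges to some $z$ if and only if $B_n$ does, and then they have the same limit.

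Both set inclusions now follow at once. For $\De(f\circ g)(x,v)\subseteq \De f(y,w)$, take $z$ in the left side; choose $t_n\searrow 0$ with $A_n\to z$; then $B_n\to z$, witnessing $z\in \De f(y,w)$ via the same sequence. The reverse inclusion is symmetric: given $z\in\De f(y,w)$ via some $t_n\searrow 0$ along which $B_n\to z$, the estimate forces $A_n\to z$, placing $z$ in $\De(f\circ g)(x,v)$.

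There is no real obstacle once the Lipschitz bound above is set up; the only point that needs a touch of care is the preliminary step of arranging that $g(x+tv)$ and $y+tw$ both lie in a neighborhood of $y$ where a single Lipschitz constant for $f$ is valid. This is handled by using the continuity of $g$ at $x$ together with the fact that $\|g(x+tv)-y\|/t$ is bounded as $t\searrow 0$ (because $tw+o(t)$ governs the numerator), so that for sufficiently small $t>0$ both points sit in an open ball around $y$ contained in $V$ on which $f$ is Lipschitz.
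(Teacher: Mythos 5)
Your proof is correct and follows essentially the same route as the paper's: the single Lipschitz estimate $\Vert A_n - B_n\Vert \le L\,\Vert g(x+t_nv)-g(x)-t_nw\Vert/t_n \to 0$ is exactly the computation in the paper, and both arguments then read off the two inclusions from the fact that $A_n$ and $B_n$ have the same convergence behavior. Your extra remark about localizing the Lipschitz constant is harmless but unnecessary here, since the hypothesis already gives a global Lipschitz constant on $V$.
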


\begin{proof} Let $a\in \De(f \circ g)(x, v)$.  Then there exists a sequence $t_n\to 0^+$ such that 
$$a=\lim_n \frac{f\circ g(x+t_nv)-f\circ g(x)}{t_n}.$$
Let $\kappa$  be a Lipschitz constant for $f$ on $V$. Let $\ve>0$ and choose $N$ such that
$\Vert(g(x+t_nv)-g(x))/t_n-g_+'(x,v)\Vert<\ve/\kappa$ for $n\geq N$.
We then note for $n\geq N$
\begin{eqnarray*}
\bigg\Vert  \frac{f\circ g(x+t_nv)-f\circ g(x)}{t_n}\!\!&-& \!\!\frac{f(g(x) +t_ng_+'(x,v))-f(g(x))}{t_n}\bigg\Vert \\
&=&  \bigg\Vert \frac{f(g(x+t_nv))-f(g(x) +t_ng_+'(x,v))}{t_n}\bigg\Vert \\
&\leq& \frac{\kappa}{t_n}\Vert g(x+t_nv)-g(x) -t_ng_+'(x,v)\Vert\\
&=&\kappa\bigg\Vert \frac{g(x+t_nv)-g(x)}{t_n} -g_+'(x,v)\bigg\Vert\leq \kappa(\ve/\kappa)=\ve.
\end{eqnarray*}
It thus follows that the sequence $[f(g(x) +t_ng_+'(x,v))-f(g(x))]/t_n$ also converges to $a$, so
$a\in \De f(g(x),g_+'(x,v))$.  The argument is reversible so the equality claimed in the proposition
holds.

\end{proof}

\section{A Converse of the Inverse Function Theorem}
The inverse function theorem asserts the existence of a local inverse if the derivative is invertible.  In this section we derive a converse result:
a  Lipschitz continuous local inverse implies an invertible derivative.

\emph{In this section we work in the following setting. Let $X,Y$ be Banach spaces, let $U$ and $V$ be nonempty open
subsets of $X$ and $Y$ resp., each equipped with the restricted metric from the containing Banach space, and let $g:U\to V$ and $f:V\to U$ be inverse homeomorphisms.}

\begin{definition} For $g:U\to V$, the \emph{G\^ateaux derivative of $g$ at} $x\in U$ is defined by $d_x^Gg(v)=g_+'(x,v)$ for all $v\in X$, provided 
such one-sided directional derivatives exist for all $v\in X$ and  the resulting map $d_x^Gg:E\to F$ is a continuous linear map.
\end{definition}

\begin{lemma}\label{L:inj}
Let $x\in U$, $y=f(x)\in V$.  Assume that $g$ has a G\^ateaux derivative $d_x^Gg: E\to F$ at $x$
and assume that $f$ is Lipschitz.  Then  $\De f(g(x),\cdot)\circ d_x^Gg$ is the identity map on $X$ and
the G\^ateaux derivative $d_x^Gg$ is injective.
\end{lemma}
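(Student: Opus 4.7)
The plan is to apply the generalized chain rule (Proposition~\ref{P:chain}) to the composition $f\circ g$, exploiting the fact that this composition is the identity on $U$. Since $f$ is Lipschitz by hypothesis and $g$ has a one-sided directional derivative $g_+'(x,v)=d_x^Gg(v)$ for every $v\in X$ (because the G\^ateaux derivative exists), the hypotheses of Proposition~\ref{P:chain} are satisfied for every direction $v$, giving
\[
\De(f\circ g)(x,v)=\De f(g(x),d_x^Gg(v)).
\]

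Next I would compute the left-hand side directly. Because $f\circ g$ is the identity map on $U$, one has $(f\circ g(x+t_nv)-f\circ g(x))/t_n = v$ for every sequence $t_n\searrow 0$, so $\De(f\circ g)(x,v)=\{v\}$. Combining this with the displayed equation yields $\De f(g(x),d_x^Gg(v))=\{v\}$ for every $v\in X$. In particular, on the image of $d_x^Gg$ the set-valued map $\De f(g(x),\cdot)$ is single-valued and returns $v$ when evaluated at $d_x^Gg(v)$, which is exactly the statement that $\De f(g(x),\cdot)\circ d_x^Gg$ is the identity on $X$.

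Injectivity of $d_x^Gg$ then follows immediately: if $d_x^Gg(v_1)=d_x^Gg(v_2)$, applying $\De f(g(x),\cdot)$ to both sides and using the identity just established gives $\{v_1\}=\{v_2\}$, hence $v_1=v_2$. (Alternatively, one can observe that $\De f(g(x),0)=\{0\}$ directly from the definition, so $d_x^Gg(v)=0$ forces $v=0$, and injectivity follows since $d_x^Gg$ is linear.)

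I do not anticipate a genuine obstacle here; the lemma is essentially a packaging of the chain rule together with the trivial observation that $f\circ g=\mathrm{id}_U$. The only mildly delicate point is the set-valued formulation of the statement: one must be careful that the expression $\De f(g(x),\cdot)\circ d_x^Gg$ is interpreted as a composition of a set-valued map with a single-valued one, and check that the image lands in singletons so that calling the result ``the identity map on $X$'' is justified. This is handled by the singleton computation of $\De(f\circ g)(x,v)$ above.
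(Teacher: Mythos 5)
Your proposal is correct and follows essentially the same route as the paper: apply Proposition~\ref{P:chain} to $f\circ g=\mathrm{id}_U$, observe that $\De(f\circ g)(x,v)=\{v\}$ directly from the definition, and read off both conclusions from the resulting identity $\De f(g(x),d_x^Gg(v))=\{v\}$. Your extra care about the set-valued interpretation (checking that $\De f(g(x),\cdot)$ is singleton-valued on the image of $d_x^Gg$) is a point the paper leaves implicit, but it is the same argument.
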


\begin{proof}  Since $f\circ g$ on $U$ is the identity map, it follows directly from the definition of $\De$  that 
$\De(f\circ g)(x,\cdot)$ is the identity map on $X$. Hence by Proposition \ref{P:chain}
for any $v\in X$,
$$v=\De(f\circ g)(x,v)=\De f(g(x), g_+'(x,v))=\De f(g(x),d_x^G(v)).$$
This equality of the left-hand and right-hand sides of the equation yields the two concluding 
assertions.
\end{proof}

Lemma \ref{L:inj} yields the invertibility of the the G\^ateaux derivative in the finite dimensional setting.
\begin{corollary}  If $X,Y$ are both finite dimensional and $g$ is G\^ateaux differentiable at $x\in U$,
then $d_x^G$ is invertible.
\end{corollary}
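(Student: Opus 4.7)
The plan is to combine the injectivity given by Lemma \ref{L:inj} with a dimension-count, using invariance of domain to match the dimensions of $X$ and $Y$.

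First, I would invoke Lemma \ref{L:inj} (implicitly using the ambient hypothesis that $f$ is Lipschitz, which in the locally Lipschitz case can be arranged by shrinking $U$ and $V$ around $x$ and $g(x)$). It tells us that $\De f(g(x),\cdot)\circ d_x^Gg$ is the identity on $X$, so in particular $d_x^Gg\colon X\to Y$ is an injective continuous linear map. Consequently its image is a linear subspace of $Y$ whose dimension equals $\dim X$.

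Second, I would observe that $f\colon V\to U$ is a homeomorphism between nonempty open subsets of finite-dimensional Banach spaces. By Brouwer's invariance of domain theorem, two such open sets can be homeomorphic only when the ambient spaces have the same dimension, so $\dim X=\dim Y$.

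Finally I would conclude: an injective linear map between finite-dimensional vector spaces of equal dimension is automatically a bijection, and between finite-dimensional normed spaces every linear bijection is bicontinuous, so $d_x^Gg$ is invertible as a continuous linear map. There is no real obstacle here; the only nontrivial ingredient beyond Lemma \ref{L:inj} is the dimension equality, and the cleanest way to extract it is invariance of domain applied to the homeomorphism $f$ (or equivalently $g$). One could alternatively bypass invariance of domain by using that the Lipschitz map $f$ cannot carry an open subset of $Y$ onto an open subset of a strictly higher-dimensional $X$ (Lipschitz maps do not increase Hausdorff dimension), paired with injectivity of $d_x^Gg$ to force the reverse inequality, but this is a detour.
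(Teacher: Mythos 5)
Your proof is correct and follows essentially the same route as the paper: injectivity of $d_x^Gg$ from Lemma \ref{L:inj}, equality of dimensions from the fact that $f$ and $g$ are homeomorphisms between open sets, and then the standard linear-algebra conclusion. The only difference is that you explicitly name invariance of domain as the source of $\dim X=\dim Y$, a detail the paper's proof asserts without citation.
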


\begin{proof}  Since $g$ and $f$ are homeomorphisms, $X$ and $Y$ must have the same dimension.  Hence the linear
map $d_x^G$ is a linear isomorphism if and only if it injective, which is the case by Lemma \ref{L:inj}.
\end{proof}
The infinite dimensional case requires more work and stronger hypotheses.
\begin{lemma}\label{L:surj}
Let $x\in U$, $y=g(x)\in V$.  Assume that $g$ is  Fr\'echet differentiable at $x$ and
that $f$ is Lipschitz on $V$.  Then the image of $X$ under  the Fr\'echet derivative $dg_x:X\to Y$ 
is dense in $Y$.
\end{lemma}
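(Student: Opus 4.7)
The plan is to show, for an arbitrary $w\in Y$, that $w$ lies in the closure of $dg_x(X)$. The natural way to produce a candidate preimage is to push $w$ back through $f$: since $V$ is open, for all sufficiently small $t>0$ the point $y+tw$ lies in $V$, so one can form
\[
v_t \;=\; \frac{f(y+tw)-f(y)}{t} \;=\; \frac{f(y+tw)-x}{t}\in X.
\]
The Lipschitz hypothesis on $f$, with constant $\kappa$, immediately gives the uniform bound $\|v_t\|\leq \kappa\|w\|$. This boundedness is the whole point of assuming more than continuity on $f$: it is what prevents the remainder term in the Fréchet expansion from blowing up relative to $t$.

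Next I would apply the Fréchet differentiability of $g$ at $x$ to the increment $h_t:=tv_t$. Since $\|h_t\|\leq \kappa t\|w\|\to 0$, one has
\[
\bigl\|g(x+h_t)-g(x)-dg_x(h_t)\bigr\|\;\leq\;\varepsilon(h_t)\,\|h_t\|\;\leq\;\kappa\|w\|\,\varepsilon(h_t)\,t,
\]
where $\varepsilon(h)\to 0$ as $h\to 0$. But by construction $x+h_t=f(y+tw)$, so $g(x+h_t)=y+tw$ and $g(x+h_t)-g(x)=tw$. Dividing the inequality by $t$ and using linearity of $dg_x$ yields
\[
\bigl\|w-dg_x(v_t)\bigr\|\;\leq\;\kappa\|w\|\,\varepsilon(h_t)\;\longrightarrow\;0
\]
as $t\to 0^+$. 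Hence $dg_x(v_t)\to w$, placing $w$ in the closure of $dg_x(X)$. Since $w\in Y$ was arbitrary, $dg_x(X)$ is dense in $Y$.

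There is not really a hard step here once one hits on the construction: the only subtlety is recognizing that the Lipschitz bound on $f$ converts the $o(\|h_t\|)$ error in Fréchet differentiability into genuine $o(t)$, which is exactly what is needed to divide by $t$ and still obtain convergence. Note that this argument falls short of surjectivity of $dg_x$ because the bounded net $\{v_t\}$ need not cluster in $X$ in the infinite dimensional setting; that is the reason Lemma \ref{L:surj} only claims density and why heavier machinery will be needed in the following section to upgrade density to surjectivity.
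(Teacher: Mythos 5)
Your proof is correct and follows essentially the same route as the paper: both construct $v_t=(f(y+tw)-f(y))/t$, use the Lipschitz bound $\|v_t\|\le\kappa\|w\|$ to make the Fr\'echet remainder $o(t)$, and observe that $g(x+tv_t)-g(x)=tw$ to conclude $dg_x(v_t)\to w$. No substantive differences.
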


\begin{proof}
Let $M_f$ be a Lipschitz constant for $f$ on $V$. Let $w\in Y$. 
 Pick $\tau>0$ small enough so that  $y+tw\in V$ for $0<t<\tau$.  We set $z_t=\frac{f(y+tw)-f(y)}{t} $ and note from the 
 Lipschitz condition that 
\begin{equation}\label{E:lips}
\Vert z_t\Vert=\Big\Vert  \frac{f(y+tw)-f(y)}{t}\Big\Vert\leq \frac{1}{t} M_f\Vert (y+tw)-y\Vert=M_f\Vert w\Vert.
\end{equation}
We divide the remainder of the proof into steps.\\
\emph{Step 1}: $(g(x+tz_t)-g(x))/t=w$ for $0<t<\tau$.
$$g(x+tz_t)-g(x)=g\bigg(f(y)+t\Big(\frac{f(y+tw)-f(y)}{t}\Big)\bigg)-y=gf(y+tw)-y=tw,$$
so $(g(x+tz_t)-g(x))/t=w$.  \\
\emph{Step 2}:  \emph{For $\ve>0$ there exists $t<\tau$ such that 
$\Vert (g(x+tz_t)-g(x))/t-dg_x(z_t)\Vert<\ve$.} 
From the Fr\'echet differentiability of $g$ at $x$, the Fr\'echet derivative $dg_x$ satisfies
$$\lim_{u\to 0} \frac{\Vert g(x+u)-g(x)-dg_x(u)\Vert}{\Vert u\Vert}=0.$$
For $\ve >0$ pick $\delta> 0$ such that
$$\frac{\Vert g(x+u)-g(x)-dg_x(u)\Vert}{\Vert u\Vert}<\frac{\ve}{M_f\Vert w\Vert} \mbox{ whenever } 0<\Vert u\Vert<\delta.$$
 Pick  $t>0$ such that $y+tw\in V$ and $t M_f\Vert w\Vert< \delta$. 
We conclude from inequality (\ref{E:lips})  and the preceding that
 \begin{eqnarray*}
 \Big\Vert\frac{g(x+tz_t)-g(x))-dg_x(tz_t)}{t}\Big\Vert&=&\Vert z_t\Vert\Big\Vert\frac{g(x+tz_t)-g(x))-dg_x(tz_t)}{\Vert tz_t\Vert}\Big\Vert\\
 &<& M_f\Vert w\Vert \frac{\ve}{M_f\Vert w\Vert} =\ve
 \end{eqnarray*}
 Using the linearity of of the Fr\'echet derivative $dg_x$, we obtain $dg_x(tz_t)=tdg_x(z_t)$, which allows us to
 rewrite the first entry in the preceding string to obtain
 $$ \Big\Vert\frac{g(x+tz_t)-g(x)}{t}-dg_x(z_t)\Big\Vert <\ve,$$
 which establishes the Step 2.
 
 We note that combining Steps 1  and 2 yields
 $$ \Vert w-dg_x(tz_t)\Vert\leq \Big\Vert w-\frac{g(x+tz_t)-g(x)}{t}\Big\Vert+
\Big\Vert \frac{g(x+tz_t)-g(x)}{t}-dg_x(z_t)\Big\Vert\leq \ve.$$
 Since $w\in Y$ and $\ve>0$ were chosen arbitrarily, this completes the proof.
 
\end{proof}

We come now to a central result of the paper, what we are calling a converse
of the inverse function theorem.
\begin{theorem}\label{T:main}
Let $X$ and $Y$ be Banach spaces, let $U$ and $V$ be nonempty open
subsets of $X$ and $Y$ resp., and let $g:U\to V$ and $f:V\to U$ be inverse
 homeomorphisms. Let $x\in U$ and $y=g(x)\in  V$.  Let $g$ be 
 Fr\'echet differentiable at $x$ and let $f$ be Lipschitz continuous on $V$.  
 Then $dg_x(\cdot): X\to Y$  is an  isomorphism.
 \end{theorem}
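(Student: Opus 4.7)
The plan is to combine the injectivity already given by Lemma \ref{L:inj} with the dense-image statement of Lemma \ref{L:surj}, upgrading density to surjectivity by showing that $dg_x$ is bounded below. Since Fr\'echet differentiability at $x$ implies G\^ateaux differentiability there with $d_x^G g = dg_x$, Lemma \ref{L:inj} tells us that $\De f(y,\cdot)\circ dg_x$ is the identity on $X$. In particular, for each $v\in X$ the derived set $\De f(y,dg_x(v))$ is the single point $v$, which by the Remark after the definition means the one-sided directional derivative $f_+'(y,dg_x(v))$ exists and equals $v$.

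The crucial step is to convert this identity into a lower bound on $dg_x$. Using the Lipschitz constant $M_f$ for $f$ on $V$, one has for sufficiently small $t>0$
\begin{equation*}
\Big\Vert \frac{f(y+t\,dg_x(v))-f(y)}{t}\Big\Vert \leq M_f\Vert dg_x(v)\Vert.
\end{equation*}
Passing to the limit $t\searrow 0$, the left-hand side converges to $\Vert f_+'(y,dg_x(v))\Vert=\Vert v\Vert$, so $\Vert v\Vert\leq M_f\Vert dg_x(v)\Vert$ for every $v\in X$. Thus $dg_x$ is bounded below by $1/M_f$.

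With this in hand the rest is routine Banach-space machinery. A bounded-below bounded linear map between Banach spaces has closed range: any Cauchy sequence in the image pulls back to a Cauchy sequence in $X$, which converges by completeness, and continuity of $dg_x$ identifies its image as the limit. Combining closed range with Lemma \ref{L:surj}, which gives dense range, forces $dg_x(X)=Y$, i.e., surjectivity. Injectivity is already established in Lemma \ref{L:inj}, so $dg_x$ is a continuous linear bijection between Banach spaces, and the bound $\Vert v\Vert\leq M_f\Vert dg_x(v)\Vert$ shows directly that $dg_x^{-1}$ is continuous with norm at most $M_f$ (alternatively, invoke the open mapping theorem). Hence $dg_x$ is a topological isomorphism.

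I expect the only delicate point to be the derivation of the lower bound, specifically the justification that the single-valuedness of $\De f(y,dg_x(v))$ upgrades to the existence of the one-sided directional derivative $f_+'(y,dg_x(v))=v$, and thus allows the passage to the limit in the Lipschitz estimate. Once that bound is in place, the remaining arguments (closed range from bounded-below, density plus closedness giving surjectivity, and continuity of the inverse) are standard.
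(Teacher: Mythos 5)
Your proof is correct and follows the same overall strategy as the paper: Lemma \ref{L:inj} supplies injectivity and the left-inverse identity $\De f(y,\cdot)\circ dg_x=\mathrm{id}_X$, the Lipschitz constant $M_f$ gives the quantitative bound, and Lemma \ref{L:surj} gives dense range. The only genuine divergence is the endgame. You show $dg_x$ is bounded below, deduce closed range by pulling Cauchy sequences back through the inverse, and combine closedness with density; the paper instead extends the partial inverse $\De f(y,\cdot):dg_x(X)\to X$ (shown to be $M_f$-Lipschitz and linear) by density to a bounded operator $\Gamma_Y$ on all of $Y$, proves $\Gamma_Y$ is bounded below via $\Vert z\Vert\leq\Vert dg_x\Vert\,\Vert\De f(y,z)\Vert$ and hence injective, and notes that injectivity is incompatible with $dg_x(X)$ being a proper subspace because the partial inverse already maps onto $X$. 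The two endgames are logically equivalent ways of saying ``dense plus closed equals everything''; yours is the more standard and, arguably, the cleaner one, and it yields $\Vert dg_x^{-1}\Vert\leq M_f$ directly without the open mapping theorem. One remark on the point you flag as delicate: you do not actually need to upgrade the singleton derived set to a genuine one-sided limit. Since $v\in\De f(y,dg_x(v))$, by definition $v$ is the limit of some sequence of difference quotients $(f(y+t_n\,dg_x(v))-f(y))/t_n$ with $t_n\searrow 0$, each of norm at most $M_f\Vert dg_x(v)\Vert$, so $\Vert v\Vert\leq M_f\Vert dg_x(v)\Vert$ follows from mere membership in the derived set. This is precisely how the paper argues (every element of $\De_\delta f(y,w)$, hence of $\De f(y,w)=\bigcap_{\delta>0}\overline{\De_\delta f(y,w)}$, has norm at most $M_f\Vert w\Vert$), and it sidesteps the question---not entirely trivial in infinite dimensions, where bounded difference quotients need not have convergent subsequences---of whether a singleton derived set forces the full limit $\lim_{t\to 0^+}$ to exist.
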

 
 \begin{proof}  By Lemma \ref{L:inj}  the Fr\'echet derivative $dg_x$ is
 injective, hence a linear isomorphism onto its image $Z=g(X)$, a subspace of $Y$, 
 and has inverse  $\De f(y,\cdot):Z\to X$.  (In particular in this setting for $w\in Z$
 it must be the case that $\De f(y,w)$ is a singleton, which we could write
 alternatively as $f_+'(y,w)$.)
 
 Let $M_f$ be the Lipschitz constant for $f$ on $V$.  By equation (\ref{E:lips})
 every member of $\De_\delta f(y,w)$ is bounded in norm by $M_f\Vert w\Vert$, and hence
 the same is true for $\De f(y,w)=\bigcap_{\delta>0} \overline{\De_\delta f(y,w)}$.  
 We conclude that $\De f(y,\cdot):Z\to X$ is Lipschitz for the Lipschitz constant
 $M_f$.  

 By Lemma \ref{L:surj} $Z$ is dense  in $Y$.    Thus the linear Lipschitz map 
   $\De f(y,\cdot):Z\to X$ extends uniquely to a linear Lipschitz map
  (hence a bounded linear operator) from $Y$ to $X$.  Label
  $dg_x=\Gamma_X$, $\De f(y,\cdot)=\Gamma_Z$ and 
  its extension to $Y$ by $\Gamma_Y$.  We note for $0\ne z\in Z$,
  $$\Vert z\Vert =\Vert \Gamma_X(\Gamma_Z(z)\Vert\leq \Vert \Gamma_X\Vert \Vert \Gamma _Z(z)\Vert,$$
  so $(1/\Vert\Gamma_X\Vert)\Vert z\Vert \leq \Vert \Gamma _Z(z)\Vert$.
  By continuity of the norm and density of $Z$ in $Y$ this inequality carries over from
  $\Gamma_Z$ to its extension  $\Gamma_Y$.    But this extended inequality implies
  $\Gamma_Y$ has a trivial kernel, which means that $\Gamma_Y$ is injective.   But if 
  $Z$ is proper in $Y$, then this is impossible, since $\Gamma_Z(Z)=X$.  Hence 
  $Z=Y$ and $\Gamma_Z=\Gamma_Y$ is an inverse for $dg_x$.
   \end{proof}
   
  \section{A Global Result}
   We can use Theorem \ref{T:main} to derive useful results for studying differentiable functions
   with locally Lipschitz inverses, in particular for deriving differentiability properties of the inverse.
     In the following $C^p$ means have continuous derivatives through
   order $p$ for $p$ a positive integer, have continuous derivatives of all order for $p=\infty$,
   and being analytic (having locally power series expansions) for $p=\omega$.
  
  \begin{theorem}\label{T:ift}
  Let $X$ and $Y$ be Banach spaces, let $U$ and $V$ be nonempty open
subsets of $X$ and $Y$ resp., and let $g:U\to V$ and $f:V\to U$ be inverse
 homeomorphisms.  Assume further that $g$ is of class $C^p$ on $U$ 
 for some $p\geq 1$ and that $f$ is locally Lipschitz on $V$.  
 Then $g$ and $f$ are inverse diffeomorphisms of class $C^p$.
 \end{theorem}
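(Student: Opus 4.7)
The plan is to reduce the global statement to a pointwise application of Theorem \ref{T:main} together with the classical (forward) inverse function theorem. The central observation is that Theorem \ref{T:main} is purely local in nature: to invoke it at a point we need only that $f$ be Lipschitz on some neighborhood of $g(x)$, which is exactly what the local Lipschitz hypothesis provides.

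First, fix an arbitrary $x\in U$ and set $y=g(x)\in V$. Since $f$ is locally Lipschitz, choose an open neighborhood $V_0\subseteq V$ of $y$ on which $f$ is Lipschitz, and let $U_0=f(V_0)$, an open neighborhood of $x$ in $U$ by continuity of $f$. The restrictions $g|_{U_0}\colon U_0\to V_0$ and $f|_{V_0}\colon V_0\to U_0$ are inverse homeomorphisms of open subsets of Banach spaces, $g$ is Fr\'echet differentiable at $x$ (since it is $C^p$), and $f|_{V_0}$ is Lipschitz. Theorem \ref{T:main} then yields that $dg_x\colon X\to Y$ is a (bicontinuous) linear isomorphism.

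Since $x$ was arbitrary, $dg_x$ is invertible at every point of $U$. Now apply the classical inverse function theorem in its $C^p$ form (valid for $p$ a positive integer, $p=\infty$, or $p=\omega$, using the standard analytic inverse function theorem in the last case): for each $x\in U$ there exist open neighborhoods $U_1\subseteq U$ of $x$ and $V_1\subseteq V$ of $y=g(x)$ such that $g|_{U_1}\colon U_1\to V_1$ is a $C^p$-diffeomorphism, with $C^p$ inverse $h\colon V_1\to U_1$. By the global bijectivity of $g$, the two continuous maps $h$ and $f|_{V_1}$ both invert $g|_{U_1}$ set-theoretically, hence they coincide on $V_1$. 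Therefore $f$ is of class $C^p$ on a neighborhood of the arbitrary point $y\in V$, and so $f$ is $C^p$ on all of $V$. Together with the hypothesis on $g$, this shows that $g$ and $f$ are inverse $C^p$-diffeomorphisms.

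There is really no serious obstacle left once Theorem \ref{T:main} is in hand; the only point requiring mild care is the matching of the locally produced inverse $h$ with the given global inverse $f$, which is immediate from uniqueness of inverses for the injective map $g|_{U_1}$. The analytic case $p=\omega$ requires only that one cite the analytic version of the inverse function theorem in the Banach space setting, rather than the $C^k$ version.
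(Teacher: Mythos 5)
Your proposal is correct and follows essentially the same route as the paper: localize to a neighborhood where $f$ is Lipschitz, invoke Theorem \ref{T:main} to get invertibility of $dg_x$, and then apply the classical $C^p$ inverse function theorem, concluding globally since $x$ was arbitrary. The only difference is that you spell out the identification of the locally produced inverse with $f$, a detail the paper leaves implicit.
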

 
 \begin{proof} We fix $x\in U$ and $y=f(x)\in V$.  
 We apply Theorem \ref{T:main} to small enough neighborhoods of $x$ and $y$
 so that $f$ is Lipschitz to see that the hypotheses of 
 the standard inverse function theorem (in the Banach space setting)  are satisfied.
 The conclusions of this theorem then follow locally from the inverse function theorem.
 (See, for example, \cite[Theorem 1.23]{Up} for the $C^\omega$-case.) 
 In this way we obtain that the conclusions of the theorem hold locally for each
 $x\in X$ and hence hold globally.
 \end{proof}
 
 We remark that the preceding results can be generalized to the setting of Banach manifolds.
 In order to obtain the preceding results in this setting one needs Lipschitz charts (between the manifold
 metric and  the Banach space metric)  at each point and then the preceding results readily extend to
 this more general setting.  
 
  \section{An Application}
  We sketch in this section one setting in which our general converse of the inverse function theorem
  can be fruitfully applied and briefly consider a specific example.
  
  Suppose we are given some equation of the form $F(x,y)=0$, where $x,y$ belong to some
  given open subset of a Banach space.  In some cases it might be possible to solve the equation
  for $y$ in terms of $x$, i.e., $y=g(x)$, which can seen to be of class $C^p$.  Let's suppose further
  that $g$ has an inverse given by $x=f(y)$, but no corresponding explicit description of this function.
  If it can be shown, however, that $f$ is (locally) Lipschitz, then we use the results of the preceding section
  to show that $f$ is also of class $C^p$.  
  
  \begin{example} Let $\mathcal{B}(H)$ be the $C^*$-algebra of bounded linear operators on the Hilbert
  space $H$.  We consider the Banach space $\mathbb{S}$ of hermitian operators and the open cone
  of positive invertible hermitian operators $\mathbb{P}$.   In recent years a useful notion of a multi-variable
  geometric mean $\Lambda$ on $\mathbb{P}$ has arisen \cite{LL13}, \cite{LL14}, generally called the Karcher mean.  One useful 
  characterization of this $n$-variable mean  $\Lambda(A_1,\ldots,A_n)$ for $A_1,\ldots,A_n\in\mathbb{P}$
  is that it is the unique solution $X$ of the equation
  $$\log (X^{-1/2}A_1X^{-1/2})+\cdots +\log (X^{-1/2}A_nX^{-1/2})=0.$$
  If we fix $A_1,\ldots, A_{n-1}$, let $Y=A_n$, and the left-hand side of the equation be $F(X,Y)$,
  then we are in the general setting of the previous paragraph with $U=V=\mathbb{P}$. One can rather easily solve
  $F(X,Y)$ for $Y$ and see it is an analytic function $g$ of $X$, but not conversely.  
  However the inverse $f$ is given by $f(Y)=\Lambda(A_1,\ldots, A_{n-1}, Y)$.  
The (local) Lipschitz property is a basic property of $\Lambda$, so the earlier
results yield $X$, the geometric mean, as an analytic function of $Y$, or alternatively we
can say $\Lambda$ is an analytic function of each of its variables.  This 
turns out to be an important property of $\Lambda$.
This type of analysis can be applied to other important operator means.
    \end{example}

\end{document}